%
%
%
%
%
%
%
%

%
%
%
\pdfpagewidth=8.5truein
\pdfpageheight=11truein

%

\documentclass[11pt]{article}
\usepackage{amssymb, amsmath, fullpage, amsthm}
\usepackage{mathrsfs}
\usepackage{tikz}
\usepackage[title]{appendix}

\parskip2mm
\setlength{\parindent}{0pt}

\newtheorem{theorem}{Theorem}[section]
\newtheorem{heuristic}[theorem]{Heuristic Result}
\newtheorem{question}[theorem]{Question}

\hyphenation{Hurwitz}

\DeclareMathOperator{\li}{li}

\numberwithin{equation}{section}

\newcommand{\vanish}[1]{}

\begin{document}

\title{Gauss Circle Primes}

\author{\sc Thomas  EHRENBORG
}

\date{}

\maketitle

\begin{abstract}
Given a circle of radius $r$ centered at the origin,
the Gauss Circle Problem concerns
counting the number of lattice points $C(r)$ within
this  circle.
It is known
that as $r$ grows large, the number of lattice points approaches
$\pi r^2$, that is,
the area of the circle.
The present research is to
study how often $C(r)$ will return a prime
number of lattice points for $r \leq n$.
The Prime Number Theorem
predicts that the number of primes
less than 
or equal to $n$ 
is asymptotic to 
$\frac{n}{\log n}$.
We find that the number  of Gauss Circle Primes
for $r \leq n$ is also of order 
$\frac{n}{\log n}$
for $n \leq 2 \times 10^6$.
We include a heuristic argument that the
Gauss Circle Primes can be approximated
by $\frac{n}{\log n}$.
\end{abstract}

\section{Gauss Circle Problem}

The Gauss Circle Problem
concerns finding the number of lattice points
$C(r)$ within a circle centered at the origin
of radius $r$.
As $r$ increases, the number of lattice
points $C(r)$ approaches $\pi r^2$, the
area of the circle.
See Figure~\ref{figure_example_radius_5} for the example
when $r = 5$.
Beginning with Gauss in 1834~\cite[pages 271 and 277]{Gauss},
researchers have focused on
improving the
error bounds between $C(r)$ and the area
$\pi r^2$.

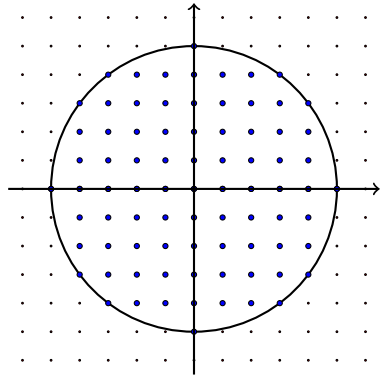
\begin{figure}
\begin{center}

\begin{tikzpicture}[scale = 0.38]
\foreach \x in {-6,-5, ..., 6}
{\foreach \y in {-6,-5, ..., 6}
{\draw[fill=red] (\x,\y) circle (0.3mm);
};};

\draw[thick] (0,0) circle (5);

{\foreach \y in {0,1, ..., 5}
{
\draw[fill=blue] (0,\y) circle (0.9mm);
\draw[fill=blue] (0,-\y) circle (0.9mm);
};};

{\foreach \y in {0,1, ..., 4}
{
\draw[fill=blue] (1,\y) circle (0.9mm);
\draw[fill=blue] (1,-\y) circle (0.9mm);
\draw[fill=blue] (-1,\y) circle (0.9mm);
\draw[fill=blue] (-1,-\y) circle (0.9mm);
};};

{\foreach \y in {0,1, ..., 4}
{
\draw[fill=blue] (2,\y) circle (0.9mm);
\draw[fill=blue] (2,-\y) circle (0.9mm);
\draw[fill=blue] (-2,\y) circle (0.9mm);
\draw[fill=blue] (-2,-\y) circle (0.9mm);
};};

{\foreach \y in {0,1, ..., 4}
{
\draw[fill=blue] (3,\y) circle (0.9mm);
\draw[fill=blue] (3,-\y) circle (0.9mm);
\draw[fill=blue] (-3,\y) circle (0.9mm);
\draw[fill=blue] (-3,-\y) circle (0.9mm);
};};

{\foreach \y in {0,1, ..., 3}
{
\draw[fill=blue] (4,\y) circle (0.9mm);
\draw[fill=blue] (4,-\y) circle (0.9mm);
\draw[fill=blue] (-4,\y) circle (0.9mm);
\draw[fill=blue] (-4,-\y) circle (0.9mm);
};};

{\foreach \y in {0}
{
\draw[fill=blue] (5,\y) circle (0.9mm);
\draw[fill=blue] (5,-\y) circle (0.9mm);
\draw[fill=blue] (-5,\y) circle (0.9mm);
\draw[fill=blue] (-5,-\y) circle (0.9mm);
};};

\draw[->,thick] (0,-6.5) -- (0,6.5);
\draw[->,thick] (-6.5,0) -- (6.5,0);

\end{tikzpicture}
\end{center}
\caption{Lattice points on a circle of radius $5$ showing $C(5) = 81$.}
\label{figure_example_radius_5} 
\end{figure}

\begin{theorem}
[Gauss 1834]
The error bound between the number of lattice points
$C(r)$ within a circle of radius $r$ centered at the origin
and the area of the circle is given by
\begin{equation}
     |C(r) - \pi r^2 | < 2 \sqrt{2} \pi r + 2 \pi.
\label{equation_Gauss}
\end{equation}
\end{theorem}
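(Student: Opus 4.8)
The plan is to use the classical area-comparison (or ``pixel'') argument that goes back to Gauss. First I would reinterpret $C(r)$ as an area: to each lattice point $(a,b)$ with $a^2 + b^2 \le r^2$ I associate the closed unit square $Q_{a,b} = [a, a+1] \times [b, b+1]$ having $(a,b)$ as its lower-left corner. Since distinct lattice points give squares with disjoint interiors and each square has area $1$, the union $S = \bigcup Q_{a,b}$ satisfies $\operatorname{Area}(S) = C(r)$ exactly. The whole difficulty is thereby transferred to estimating $\operatorname{Area}(S)$ against the area $\pi r^2$ of the disk.

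The key geometric step is a pair of containments sandwiching $S$ between two concentric disks. Writing $D(\rho)$ for the closed disk of radius $\rho$ about the origin, I would establish
\[
   D(r - \sqrt{2}) \subseteq S \subseteq D(r + \sqrt{2}).
\]
For the right-hand inclusion, every point of a square $Q_{a,b}$ lies within the diagonal length $\sqrt{2}$ of its corner $(a,b)$, so its distance to the origin is at most $\sqrt{a^2+b^2} + \sqrt{2} \le r + \sqrt{2}$. For the left-hand inclusion, any point $p$ with $|p| \le r - \sqrt{2}$ lies in some square $Q_{a,b}$ (the squares tile the plane), whose corner then satisfies $\sqrt{a^2+b^2} \le |p| + \sqrt{2} \le r$; hence that square is among those forming $S$, so $p \in S$.

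Comparing areas across the two containments gives $\pi (r-\sqrt{2})^2 \le C(r) \le \pi(r+\sqrt{2})^2$. Expanding $\pi(r \pm \sqrt{2})^2 = \pi r^2 \pm 2\sqrt{2}\,\pi r + 2\pi$ and subtracting $\pi r^2$ yields a two-sided bound, of which \eqref{equation_Gauss} is the weaker symmetric consequence; in fact the lower side carries slack, since $\pi(r-\sqrt{2})^2 = \pi r^2 - 2\sqrt{2}\,\pi r + 2\pi$ already exceeds $\pi r^2 - 2\sqrt{2}\,\pi r - 2\pi$. The strict inequality in \eqref{equation_Gauss} then follows because the containments are proper, so at least one area comparison is strict.

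I expect the main obstacle to be making the two containments fully rigorous at the boundary rather than the algebra that follows: one must argue carefully that for a point $p$ the square containing it is genuinely included in $S$ (handling points landing on a lattice line or on the circle $x^2+y^2=r^2$), and confirm that the strictness needed for the ``$<$'' in \eqref{equation_Gauss} survives. Everything after that is a routine expansion of $(r\pm\sqrt{2})^2$.
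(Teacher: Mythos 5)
Your proposal is correct and follows essentially the same argument as the paper: attach a unit square to each lattice point, sandwich the resulting region between disks of radii $r-\sqrt{2}$ and $r+\sqrt{2}$, compare areas, and observe that the lower bound $\pi r^2 - 2\sqrt{2}\pi r + 2\pi$ has slack so the symmetric bound \eqref{equation_Gauss} follows. Your treatment of the two containments is in fact somewhat more careful than the paper's, but the underlying idea is identical.
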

\begin{proof}
For each lattice point within a circle of radius $r$,
we put a unit square to the upper right of each lattice point.
See Figure~\ref{figure_geometric_argument} for the region
in the case 
when $r=5$.
Since each unit square has area $1$, 
by bounding the area of the shape created
by the squares, we simultaneously
bound the number of points within the circle.

We know that the maximum distance that adding 
the unit squares can excede 
the boundary
of the circle is $\sqrt{2}$ because 
the diagonal length of a single unit square is
$\sqrt{2}$.
A similar argument can be used 
to find a lower bound for the area.
Thus we can bound the area of the shape with 
two circles of radii
$r + \sqrt{2}$ and $r - \sqrt{2}$, that is,
the 
area of the inner circle is less than the area of the 
squares  
is less than the  area of the outer circle:
$$
   \pi (r - \sqrt{2})^2 < C(r)  <    \pi (r + \sqrt{2})^2 .
$$
Expanding and subtracting $\pi r^2$ gives
\begin{equation}
  - 2 \sqrt{2} \pi r + 2 \pi < C(r) - \pi r^2   
           <    2 \sqrt{2} \pi r + 2 \pi.
\label{equation_koala}
\end{equation}
Since $- (2 \sqrt{2} \pi r + 2 \pi) < -2 \sqrt{2} \pi r + 2 \pi$,
the inequality in~(\ref{equation_koala}) 
implies (\ref{equation_Gauss}), as claimed.\end{proof}

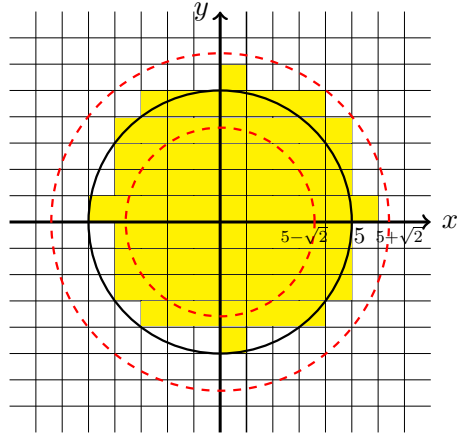
\begin{figure}
\begin{center}
\begin{tikzpicture}[x=3.5mm, y=3.5mm]

\draw[line width = 0.2mm] (1,8) -- (1,-8);
\draw[line width = 0.1mm] (2,8) -- (2,-8);
\draw[line width = 0.1mm] (3,8) -- (3,-8);
\draw[line width = 0.1mm] (4,8) -- (4,-8);
\draw[line width = 0.1mm] (5,8) -- (5,-8);
\draw[line width = 0.1mm] (6,8) -- (6,-8);
\draw[line width = 0.1mm] (7,8) -- (7,-8);
\draw[line width = 0.1mm] (-1,8) -- (-1,-8);
\draw[line width = 0.1mm] (-2,8) -- (-2,-8);
\draw[line width = 0.1mm] (-3,8) -- (-3,-8);
\draw[line width = 0.1mm] (-4,8) -- (-4,-8);
\draw[line width = 0.1mm] (-5,8) -- (-5,-8);
\draw[line width = 0.1mm] (-6,8) -- (-6,-8);
\draw[line width = 0.1mm] (-7,8) -- (-7,-8);

\draw[line width = 0.1mm] (-8,1) -- (8,1);
\draw[line width = 0.1mm] (-8,2) -- (8,2);
\draw[line width = 0.1mm] (-8,3) -- (8,3);
\draw[line width = 0.1mm] (-8,4) -- (8,4);
\draw[line width = 0.1mm] (-8,5) -- (8,5);
\draw[line width = 0.1mm] (-8,6) -- (8,6);
\draw[line width = 0.1mm] (-8,7) -- (8,7);

\draw[line width = 0.1mm] (-8,-1) -- (8,-1);
\draw[line width = 0.1mm] (-8,-2) -- (8,-2);
\draw[line width = 0.1mm] (-8,-3) -- (8,-3);
\draw[line width = 0.1mm] (-8,-4) -- (8,-4);
\draw[line width = 0.1mm] (-8,-5) -- (8,-5);
\draw[line width = 0.1mm] (-8,-6) -- (8,-6);
\draw[line width = 0.1mm] (-8,-7) -- (8,-7);

\draw [draw=none, fill=yellow] (0.04,0.04) rectangle(.96,.96);
\draw [draw=none, fill=yellow] (1.02,0.02) rectangle(1.97,.97);
\draw [draw=none, fill=yellow] (2.02,0.02) rectangle(2.97,.97);
\draw [draw=none, fill=yellow] (3.02,0.02) rectangle(3.97,.97);
\draw [draw=none, fill=yellow] (4.02,0.02) rectangle(4.97,.97);
\draw [draw=none, fill=yellow] (5.02,0.02) rectangle(5.97,.97);
\draw [draw=none, fill=yellow] (-0.02,0.02) rectangle(-.97,.97);
\draw [draw=none, fill=yellow] (-1.02,0.02) rectangle(-1.97,.97);
\draw [draw=none, fill=yellow] (-2.02,0.02) rectangle(-2.97,.97);
\draw [draw=none, fill=yellow] (-3.02,0.02) rectangle(-3.97,.97);
\draw [draw=none, fill=yellow] (-4.02,0.02) rectangle(-4.97,.97);

\draw [draw=none, fill=yellow] (0.04,-0.02) rectangle(.97,-.97);
\draw [draw=none, fill=yellow] (1.02,-0.02) rectangle(1.97,-.97);
\draw [draw=none, fill=yellow] (2.02,-0.02) rectangle(2.97,-.97);
\draw [draw=none, fill=yellow] (3.02,-0.02) rectangle(3.97,-.97);
\draw [draw=none, fill=yellow] (4.02,-0.02) rectangle(4.97,-.97);
\draw [draw=none, fill=yellow] (-0.02,-0.02) rectangle(-.97,-.97);
\draw [draw=none, fill=yellow] (-1.02,-0.02) rectangle(-1.97,-.97);
\draw [draw=none, fill=yellow] (-2.02,-0.02) rectangle(-2.97,-.97);
\draw [draw=none, fill=yellow] (-3.02,-0.02) rectangle(-3.97,-.97);

\draw [draw=none, fill=yellow] (0.02,1.02) rectangle(.96,1.97);
\draw [draw=none, fill=yellow] (1.02,1.02) rectangle(1.97,1.97);
\draw [draw=none, fill=yellow] (2.02,1.02) rectangle(2.97,1.97);
\draw [draw=none, fill=yellow] (3.02,1.02) rectangle(3.97,1.97);
\draw [draw=none, fill=yellow] (4.02,1.02) rectangle(4.97,1.97);
\draw [draw=none, fill=yellow] (-0.02,1.02) rectangle(-.97,1.97);
\draw [draw=none, fill=yellow] (-1.02,1.02) rectangle(-1.97,1.97);
\draw [draw=none, fill=yellow] (-2.02,1.02) rectangle(-2.97,1.97);
\draw [draw=none, fill=yellow] (-3.02,1.02) rectangle(-3.97,1.97);

\draw [draw=none, fill=yellow] (0.02,2.02) rectangle(.97,2.97);
\draw [draw=none, fill=yellow] (1.02,2.02) rectangle(1.97,2.97);
\draw [draw=none, fill=yellow] (2.02,2.02) rectangle(2.97,2.97);
\draw [draw=none, fill=yellow] (3.02,2.02) rectangle(3.97,2.97);
\draw [draw=none, fill=yellow] (4.02,2.02) rectangle(4.97,2.97);
\draw [draw=none, fill=yellow] (-0.02,2.02) rectangle(-.97,2.97);
\draw [draw=none, fill=yellow] (-1.02,2.02) rectangle(-1.97,2.97);
\draw [draw=none, fill=yellow] (-2.02,2.02) rectangle(-2.97,2.97);
\draw [draw=none, fill=yellow] (-3.02,2.02) rectangle(-3.97,2.97);

\draw [draw=none, fill=yellow] (0.02,3.02) rectangle(.97,3.97);
\draw [draw=none, fill=yellow] (1.02,3.02) rectangle(1.97,3.97);
\draw [draw=none, fill=yellow] (2.02,3.02) rectangle(2.97,3.97);
\draw [draw=none, fill=yellow] (3.02,3.02) rectangle(3.97,3.97);
\draw [draw=none, fill=yellow] (4.02,3.02) rectangle(4.97,3.97);
\draw [draw=none, fill=yellow] (-0.02,3.02) rectangle(-.97,3.97);
\draw [draw=none, fill=yellow] (-1.02,3.02) rectangle(-1.97,3.97);
\draw [draw=none, fill=yellow] (-2.02,3.02) rectangle(-2.97,3.97);
\draw [draw=none, fill=yellow] (-3.02,3.02) rectangle(-3.97,3.97);

\draw [draw=none, fill=yellow] (0.02,4.02) rectangle(.97,4.97);
\draw [draw=none, fill=yellow] (1.02,4.02) rectangle(1.97,4.97);
\draw [draw=none, fill=yellow] (2.02,4.02) rectangle(2.97,4.97);
\draw [draw=none, fill=yellow] (3.02,4.02) rectangle(3.97,4.97);
\draw [draw=none, fill=yellow] (-0.02,4.02) rectangle(-.97,4.97);
\draw [draw=none, fill=yellow] (-1.02,4.02) rectangle(-1.97,4.97);
\draw [draw=none, fill=yellow] (-2.02,4.02) rectangle(-2.97,4.97);


\draw [draw=none, fill=yellow] (0.02,-1.02) rectangle(.96,-1.97);
\draw [draw=none, fill=yellow] (1.02,-1.02) rectangle(1.97,-1.97);
\draw [draw=none, fill=yellow] (2.02,-1.02) rectangle(2.97,-1.97);
\draw [draw=none, fill=yellow] (3.02,-1.02) rectangle(3.97,-1.97);
\draw [draw=none, fill=yellow] (4.02,-1.02) rectangle(4.97,-1.97);
\draw [draw=none, fill=yellow] (-0.02,-1.02) rectangle(-.97,-1.97);
\draw [draw=none, fill=yellow] (-1.02,-1.02) rectangle(-1.97,-1.97);
\draw [draw=none, fill=yellow] (-2.02,-1.02) rectangle(-2.97,-1.97);
\draw [draw=none, fill=yellow] (-3.02,-1.02) rectangle(-3.97,-1.97);

\draw [draw=none, fill=yellow] (0.02,-2.02) rectangle(.97,-2.97);
\draw [draw=none, fill=yellow] (1.02,-2.02) rectangle(1.97,-2.97);
\draw [draw=none, fill=yellow] (2.02,-2.02) rectangle(2.97,-2.97);
\draw [draw=none, fill=yellow] (3.02,-2.02) rectangle(3.97,-2.97);
\draw [draw=none, fill=yellow] (4.02,-2.02) rectangle(4.97,-2.97);
\draw [draw=none, fill=yellow] (-0.02,-2.02) rectangle(-.97,-2.97);
\draw [draw=none, fill=yellow] (-1.02,-2.02) rectangle(-1.97,-2.97);
\draw [draw=none, fill=yellow] (-2.02,-2.02) rectangle(-2.97,-2.97);
\draw [draw=none, fill=yellow] (-3.02,-2.02) rectangle(-3.97,-2.97);

\draw [draw=none, fill=yellow] (0.02,-3.02) rectangle(.97,-3.97);
\draw [draw=none, fill=yellow] (1.02,-3.02) rectangle(1.97,-3.97);
\draw [draw=none, fill=yellow] (2.02,-3.02) rectangle(2.97,-3.97);
\draw [draw=none, fill=yellow] (3.02,-3.02) rectangle(3.97,-3.97);
\draw [draw=none, fill=yellow] (-0.02,-3.02) rectangle(-.97,-3.97);
\draw [draw=none, fill=yellow] (-1.02,-3.02) rectangle(-1.97,-3.97);
\draw [draw=none, fill=yellow] (-2.02,-3.02) rectangle(-2.97,-3.97);

\draw [draw=none, fill=yellow] (0.02,4.02) rectangle(.97,4.97);
\draw [draw=none, fill=yellow] (1.02,4.02) rectangle(1.97,4.97);
\draw [draw=none, fill=yellow] (2.02,4.02) rectangle(2.97,4.97);
\draw [draw=none, fill=yellow] (3.02,4.02) rectangle(3.97,4.97);
\draw [draw=none, fill=yellow] (-0.02,4.02) rectangle(-.97,4.97);
\draw [draw=none, fill=yellow] (-1.02,4.02) rectangle(-1.97,4.97);
\draw [draw=none, fill=yellow] (-2.02,4.02) rectangle(-2.97,4.97);

\draw [draw=none, fill=yellow] (.02,5.02) rectangle(.97,5.97);

\draw [draw=none, fill=yellow] (0.02,-4.02) rectangle(0.97,-4.97);
\draw [draw=none, fill=yellow] (0.02,-4.02) rectangle(0.97,-4.97);

\draw[->,line width = 0.4mm] (-8,0) -- (8,0) node[right, scale=1.0]{$x$};
\draw[->,line width = 0.4mm] (0,-8) -- (0,8) node[left, scale=1.0]{$y$};


\draw[line width = 0.3mm] (0,0)  circle(5);
\draw[color=red, dashed, line width = 0.3mm] (0,0)  circle({5+sqrt(2)});
\draw[color=red, dashed, line width = 0.3mm] (0,0)  circle({5-sqrt(2)});


\draw ({5+0.3},{0-0.5}) node[scale=0.8]{$5$};
\draw ({5+sqrt(2)+0.39},{0-0.5}) node[scale=0.8]{$\scriptstyle 5+\sqrt{2}$};
\draw ({5-sqrt(2)-0.39},{0-0.5}) node[scale=0.8]{$\scriptstyle 5-\sqrt{2}$};


\end{tikzpicture}
\end{center}
\caption{Geometric argument for Gauss' error bound.}
\label{figure_geometric_argument}
\end{figure}

Sharper bounds for the error estimate 
are described using a circle of radius $\sqrt{x}$, rather
than an integer radius.
See~\cite{Iwaniec_Mozzochi} for further details.

\begin{theorem}

\vspace{.1in}
For a circle of radius $\sqrt{x}$
\begin{eqnarray*}
    |C(\sqrt{x}) - \pi x| = \left\{
     \begin{array}{ll}
       O(\sqrt{x})   &\mbox{[Gauss 1834]}\\
       O(x^{1/3})   &\mbox{[Sierpi\'nski 1906]}\\
       O(x^{37/112})
    &\mbox{[Littlewood--Walfisz 1927]}
\\
       O(x^{15/46})
    &\mbox{[Titchmarsh 1935]}
\\
       O(x^{13/40})
    &\mbox{[Hua 1942]}
\\
       O(x^{7/22})
    &\mbox{[Iwaniec--Mozzochi 1988]}
\end{array}
   \right.
\end{eqnarray*}
\end{theorem}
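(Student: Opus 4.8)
The six bounds form a hierarchy, and my plan is to treat them separately, each sharper estimate resting on a more refined analysis of a single underlying exponential sum. The first line is free: substituting $r = \sqrt{x}$ into the inequality $|C(r) - \pi r^2| < 2\sqrt{2}\,\pi r + 2\pi$ of the preceding theorem gives $|C(\sqrt{x}) - \pi x| < 2\sqrt{2}\,\pi\sqrt{x} + 2\pi$, whose right-hand side is $O(\sqrt{x})$. The remaining five lines each require genuinely new analytic input and cannot be extracted from the elementary square-packing argument.

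The common engine is to recast the count arithmetically and then isolate its oscillation. Writing $r_2(n)$ for the number of ordered representations $n = a^2 + b^2$ with $a, b \in \Zzz$, one has $C(\sqrt{x}) = \sum_{0 \le n \le x} r_2(n)$, so the error term $P(x) = C(\sqrt{x}) - \pi x$ measures the fluctuation of a summatory arithmetic function about its mean $\pi x$. The key step I would carry out is the Voronoi-type summation formula, which, after inserting the leading asymptotics of the Bessel function $J_1$, expresses $P(x)$ as a truncated trigonometric sum
\[
P(x) = \frac{x^{1/4}}{\pi} \sum_{1 \le n \le N} \frac{r_2(n)}{n^{3/4}} \cos\!\left(2\pi\sqrt{nx} - \tfrac{3\pi}{4}\right) + O\!\left(x^{1/2+\varepsilon} N^{-1/2}\right).
\]
This reduces the geometric problem to bounding exponential sums of the shape $\sum_{n \sim M} e(\sqrt{nx})$ with smoothly varying phase $f(n) = \sqrt{nx}$, taken dyadically over $M \le N$.

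From this point each historical improvement corresponds to a stronger cancellation estimate for these sums. Sierpi\'nski's $O(x^{1/3})$ is the first nontrivial bound, realized historically through Voronoi's geometric method and equivalent in modern language to the simplest Weyl--van der Corput differencing step. The bounds of Littlewood--Walfisz ($37/112$), Titchmarsh ($15/46$), and Hua ($13/40$) are then obtained by iterating the van der Corput $A$- and $B$-processes and optimizing over the resulting exponent pairs $(k,\ell)$: each admissible pair yields a bound on $\sum_{n \sim M} e(f(n))$, and feeding the best pair then available into the dyadic sum produces the stated exponent. I would carry these out by reducing each exponent to its corresponding exponent-pair inequality and invoking the pair that realizes it.

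The final line, the Iwaniec--Mozzochi bound $O(x^{7/22})$, is the genuine obstacle and lies well beyond the exponent-pair calculus. Here the plan is to apply the Bombieri--Iwaniec method: partition the summation range into short segments, linearize the phase on each segment by rational approximation of $f'$, and reduce to a bilinear sum whose size is controlled by two counting problems, the \emph{first} and \emph{second spacing problems}, measuring how often the approximating rationals and their associated parameters nearly coincide. The hard part is the second spacing problem, where one must bound the number of solutions of a system of Diophantine inequalities; this is the crux of Iwaniec and Mozzochi's work and the step I expect to consume the bulk of the effort. Because a genuinely self-contained treatment of all six bounds would recapitulate a century of analytic number theory, the realistic proposal is to establish Gauss' bound directly, reconstruct the exponent-pair estimates for the four intermediate cases, and develop (or, failing that, cite in detail) the Bombieri--Iwaniec spacing machinery for the Iwaniec--Mozzochi exponent.
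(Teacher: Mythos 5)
The paper offers no proof of this theorem at all: it is stated as a historical survey, with the reader directed to the Iwaniec--Mozzochi reference for details, so there is nothing internal to compare your argument against. Measured on its own terms, your proposal is accurate where it is concrete and honest where it is not. The first line is the only part you actually establish, and you do it exactly as the paper implicitly intends, by substituting $r=\sqrt{x}$ into the elementary bound $|C(r)-\pi r^2|<2\sqrt{2}\,\pi r+2\pi$ from the preceding theorem. Your reduction of the remaining bounds to the oscillation of $P(x)=\sum_{0\le n\le x}r_2(n)-\pi x$ via the truncated Vorono\"{\i}/Hardy--Landau formula is correctly stated (the phase $2\pi\sqrt{nx}-\tfrac{3\pi}{4}$ and the weight $x^{1/4}r_2(n)n^{-3/4}$ are the right asymptotics of $\sqrt{x}\,r_2(n)n^{-1/2}J_1(2\pi\sqrt{nx})$), and your attribution of the intermediate exponents to van der Corput differencing and exponent pairs, and of $7/22$ to the Bombieri--Iwaniec spacing problems, matches the actual literature. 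But none of that is carried out: for five of the six lines your proposal is a plan to prove the theorem rather than a proof, and you say so yourself. That is an acceptable outcome here only because the statement is a citation of a century of prior work rather than a result of the paper; if this theorem were load-bearing, the exponent-pair computations and the second spacing problem would each need to be done or explicitly cited line by line, which is what the paper does by deferring entirely to the references.
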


It is interesting to note that all of the bounds since
Littlewood-Walfisz and Landau are of the form
$O(x^{k/(3k+1)})$.

In 1988 Iwaniec and Mozzochi~\cite{Iwaniec_Mozzochi}
showed the conjectured bound $O(x^{1/4 + \epsilon})$
for $\epsilon > 0$ is related to the Riemann
hypothesis.

In this paper, we will instead focus on the arithmetic
properties of the values of $C(r)$. 
We call a value of $C(r)$ 
that is a prime number a {\em Gauss Circle Prime}.
We ask
how often will $C(r)$ return a prime number of
lattice points for $r \leq n$
where $r$ is an integer.
See Table~\ref{table_values}
for values of $C(r)$ for $r = 1, \ldots, 10$.

\begin{table}
\begin{center}
\begin{tabular}{r||r|r|r|r|r|r|r|r|r|r}
$r$   
&1
&2
&3
&4
&5
&6
&7
&8
&9
&10\\
\hline
$C(r)$
&{\color{blue}\bf 5}   
&{\color{blue}\bf 13}  
&{\color{blue}\bf 29}  
&49 
&81 
&{\color{blue}\bf 113}
&{\color{blue}\bf 149}  
&{\color{blue}\bf 197}
&253 
&{\color{blue}\bf 317} 
\end{tabular}
\end{center}
\label{table_values}
\caption{Values of $C(r)$ for $r = 1, \ldots, 10$.
Prime values are indicated in {\bf {\color{blue} blue}}.}
\end{table}

In order to state our results,
we recall the Prime Number Theorem;
see~\cite{Hadamard,delaValeePoussin}.
Here 
$\pi(n)$ is the number of primes less than or equal to $n$.

\begin{theorem}
[Hadamard and de la Vall\'ee Poussin, 1896]
The prime number function
$\pi(n)$ is asymptotic to $\frac{n}{\log n}$.
In other words, the following
limit holds:
$$
   \lim_{n \rightarrow \infty} \frac{\pi(n)}{\frac{n}{\log n}} = 1.
$$
\end{theorem}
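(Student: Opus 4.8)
The plan is to follow the classical analytic approach of Hadamard and de la Vall\'ee Poussin, routing everything through the Riemann zeta function and a Chebyshev-type weighted counting function rather than attacking $\pi(n)$ directly. The first step is to replace $\pi(n)$ by the Chebyshev function $\psi(x) = \sum_{p^k \leq x} \log p$, since the prime number theorem is equivalent to the cleaner asymptotic $\psi(x) \sim x$; the passage between the two statements is a routine partial summation (Abel summation) that I would carry out only at the very end.

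Next I would introduce $\zeta(s) = \sum_{n \geq 1} n^{-s}$ for $\Re(s) > 1$, together with its Euler product $\zeta(s) = \prod_p (1 - p^{-s})^{-1}$. Taking the logarithmic derivative produces the Dirichlet series $-\zeta'(s)/\zeta(s) = \sum_{n \geq 1} \Lambda(n)\, n^{-s}$, where $\Lambda$ is the von Mangoldt function, so that $\psi(x) = \sum_{n \leq x} \Lambda(n)$ is exactly the summatory function attached to $-\zeta'/\zeta$. I would then establish the two analytic facts that drive the whole argument: that $\zeta$ continues meromorphically past the line $\Re(s) = 1$ with a single simple pole at $s = 1$ of residue $1$, and---the crucial input---that $\zeta(1 + it) \neq 0$ for every real $t$.

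The nonvanishing on the line $\Re(s) = 1$ is the main obstacle, and I would handle it with the standard device built on the elementary inequality $3 + 4\cos\theta + \cos 2\theta = 2(1 + \cos\theta)^2 \geq 0$. Applying this to $\log|\zeta(\sigma)^3\, \zeta(\sigma + it)^4\, \zeta(\sigma + 2it)|$ and letting $\sigma \to 1^{+}$ shows that a hypothetical zero of $\zeta$ at $1 + it$ would force this product to decay faster than the triple pole at $s = 1$ permits, a contradiction. With the pole and the boundary nonvanishing in hand, the function $-\zeta'(s)/\zeta(s) - 1/(s-1)$ extends holomorphically to a neighborhood of the closed half-plane $\Re(s) \geq 1$.

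Finally I would invoke a Tauberian theorem to convert this analytic information into the asymptotic $\psi(x) \sim x$. The cleanest route is the Wiener--Ikehara theorem applied to $-\zeta'/\zeta$, or equivalently Newman's contour-integral argument showing that the integral $\int_1^\infty (\psi(x) - x)/x^2 \, dx$ converges; combined with the monotonicity of $\psi$, this forces $\psi(x) \sim x$. Undoing the partial summation of the first step then delivers $\pi(n) \sim n/\log n$, which is precisely the claimed limit. I expect the zero-free region on $\Re(s)=1$ to be the one genuinely delicate point, with every other step being either classical function theory or bookkeeping.
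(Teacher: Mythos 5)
The paper does not prove this theorem at all: it is quoted as a classical result of Hadamard and de la Vall\'ee Poussin with citations to the 1896 originals, so there is no in-paper argument to compare yours against. Your outline is the standard analytic proof and its skeleton is correct: the reduction of $\pi(n) \sim n/\log n$ to $\psi(x) \sim x$ by partial summation, the identity $-\zeta'(s)/\zeta(s) = \sum_n \Lambda(n) n^{-s}$, the nonvanishing of $\zeta$ on $\Re(s) = 1$ via $3 + 4\cos\theta + \cos 2\theta \geq 0$, and the Tauberian passage (Wiener--Ikehara, or Newman's argument that $\int_1^\infty (\psi(x)-x)x^{-2}\,dx$ converges) are exactly the ingredients of the modern treatment. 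Be aware, though, that what you have written is a roadmap rather than a proof: the two steps you wave at --- the meromorphic continuation of $\zeta$ past $\Re(s)=1$ and the Tauberian theorem itself --- each require a genuine argument (the continuation needs at least the elementary $\zeta(s) = s/(s-1) - s\int_1^\infty \{u\} u^{-s-1}\,du$ device, and Newman's contour integral needs the explicit kernel and the splitting of the contour), and the contradiction in the nonvanishing step should be spelled out: if $\zeta(1+it_0)=0$ with $t_0 \neq 0$, then $|\zeta(\sigma)^3 \zeta(\sigma+it_0)^4 \zeta(\sigma+2it_0)| \geq 1$ for $\sigma > 1$ forces a zero of order at least $3/4$ at $1+it_0$ to cancel the triple pole, which is impossible since the zero contributes order $4$ while the pole contributes only order $3$. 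As an answer to ``prove the Prime Number Theorem'' your sketch is the right one; as a replacement for the paper's citation it would need each of these steps executed in full.
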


Analogous to the prime number function,
let
$\kappa(n)$ be the number of Gauss Circle Primes for $r \leq n$.
We will focus on the following two questions.

\begin{question}
Do the Gauss Circle Primes, which have a geometric description, behave
like the Prime Number Theorem?
\end{question}

\begin{question}
More generally, are the Gauss Circle Primes and prime numbers
distributed similarly?
\end{question}

Understanding this new geometric model for prime numbers could give new
ideas in important open problems related to prime numbers
and applications
including:
(i) the Riemann Hypothesis,
which is related to the distribution of prime numbers:
All of the non-trivial zeroes of
the Riemann zeta function lie 
in the complex plane with real part equal to $1/2$,
(ii)
designing new encryption security that uses prime numbers, such as RSA,
and
(iii)
finding geometric methods to exploit current encryption methods.

\section{Preliminary Results and Data}

We begin with a result that follows from the symmetry of the lattice
points.  This is used to create a Java program that runs in linear
time in the variable $r$.

\begin{theorem}
  The number of lattice points in a circle of radius
  $r$ satisfies
  $$
     C(r) \equiv 1 \bmod 4.
  $$
In particular, if $C(r)$ is a Gauss Circle Prime
then $C(r) \equiv 1 \bmod 4$.
\label{theorem_number_lattice_points}
\end{theorem}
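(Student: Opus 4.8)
The plan is to realize $C(r)$ as a count of orbits of a symmetry group acting on the lattice points in the disk, and to show that, apart from a single fixed point, every orbit has size exactly $4$. Concretely, I would write
$$
  C(r) = \#\{(x,y) \in \Zzz^2 : x^2 + y^2 \leq r^2\},
$$
and consider the quarter-turn rotation $\rho(x,y) = (-y,x)$. Because $\rho$ preserves the quantity $x^2+y^2$, it carries the set of lattice points inside the disk to itself, and since $\rho^4 = \mathrm{id}$ it generates an action of the cyclic group $C_4$ on these points. I would then partition the lattice points into the orbits of this action and count their total modulo $4$.

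The decisive observation is that the action is free away from the origin. The origin $(0,0)$ is fixed by all of $C_4$ and so forms an orbit of size $1$. For any other point, I claim its stabilizer is trivial: a nontrivial stabilizer would have to contain $\rho^2$, the half-turn $(x,y) \mapsto (-x,-y)$, since the only subgroups of $C_4$ other than $\{1\}$ both contain $\rho^2$. But $\rho^2(x,y) = (x,y)$ forces $-x = x$ and $-y = y$, hence $x = y = 0$. Therefore every non-origin lattice point lies in an orbit of size exactly $4$.

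Counting orbits then gives $C(r) = 1 + 4m$, where $m$ is the number of size-$4$ orbits, so $C(r) \equiv 1 \bmod 4$. The second assertion is immediate: a Gauss Circle Prime is by definition a prime value of $C(r)$, and since $C(r) \equiv 1 \bmod 4$ for every $r$, any prime value inherits the same congruence.

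There is no serious obstacle here; the whole argument rests on the single elementary fact that the half-turn rotation fixes only the origin, which forces all remaining orbits to attain the full size $4$. The one point meriting a moment's care — and the only place a careless statement could fail — is ruling out orbits of size $2$, which is exactly what the triviality of the stabilizer accomplishes, since a size-$2$ orbit would require a point fixed by $\rho^2$. An alternative phrasing that avoids group-theoretic language would be to exhibit the four rotated copies $\{(x,y),(-y,x),(-x,-y),(y,-x)\}$ of each non-origin point directly and verify that these quadruples are disjoint and partition the non-origin lattice points.
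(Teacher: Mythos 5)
Your proof is correct. It differs from the paper's in the way the lattice points are partitioned: the paper splits the disk into the four open quadrants (which have equal counts by symmetry, hence a multiple of $4$), the four open semi-axes (each containing $r$ points, hence $4r$ in total), and the origin, and then sums $4k + 4r + 1$. You instead let the quarter-turn rotation generate a $C_4$-action on the lattice points of the disk and invoke orbit--stabilizer, observing that only the origin has a nontrivial stabilizer, so $C(r) = 1 + 4m$. Both arguments rest on the same four-fold symmetry, but your packaging is more uniform: it needs no separate treatment of the axis points, it does not use the fact that each semi-axis contains exactly $r$ points (so it applies verbatim to non-integer radii), and the only nontrivial verification is that the half-turn fixes only the origin --- which you correctly identify as the step that rules out orbits of size $2$. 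The paper's version is more elementary in that it avoids group-theoretic language, at the cost of a small case analysis; your closing remark about exhibiting the quadruples $\{(x,y),(-y,x),(-x,-y),(y,-x)\}$ explicitly shows the two phrasings are interchangeable.
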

\begin{proof}
To count the number of lattice points
in a circle of radius $r$,
one must first count the number of lattice points
in the open first quadrant.
By symmetry, each of the four open quadrants contain
the same number of lattice points.
It remains to count the lattice points on the axes.
Ignoring the origin,
the positive $x$ axis has $r$ lattice points in the circle.
By symmetry, the negative $x$ axis, positive $y$ axis
and negative $y$ axis also each have $r$ lattice points
within the circle.
Adding the lattice point at the origin gives
$4r + 1$ lattice points on two axes.
Since the number of lattice
points in the four open quadrants is a multiple
of four,
and there are $4r+1$ lattice points on the axes,
the total number of lattice points in the circle
is of  the form $4k+1$ for some integer $k$.
\end{proof}

See Tables~\ref{table_one} and 3 for values
of $\pi(n)$,
$\kappa(n)$ and $\frac{n}{\log n}$.
Beginning at $n = 167$, the values satisfy the inequality
$\pi(n) > \kappa(n) > \frac{n}{\log n}$.
This continues to hold at least up to $n = 2\cdot 10^6$.

\begin{table}
\begin{center}
\begin{tabular}{|r|rrr|}
\hline
$n$   &$\pi(n)$   &$\kappa(n)$    &$\lfloor{\frac{n}{\log n}}\rceil$\\
\hline
100   
&25    
& 30   
&   22
\\
200   
&46    
& 45   
&   38
\\
300   
&62    
& 60   
&   53
\\
400   
&78    
& 75   
&   67\\
500   
&95    
&92    
&   80\\
600   
&109    
&106   
&  94 
\\
700   
& 125    
&119  
&   107\\
800   
&139    
&133   
&   120\\
900   
&154    
&141   
&   132\\
1000  
&168    
&157   
&   145\\
\hline
\end{tabular}
\end{center}
\label{table_one}
\caption{Gauss Circle Prime function $\kappa(n)$
compared with 
the prime number function $\pi(n)$
and the Prime Number Theorem asymptotic
$\frac{n}{\log n}$.}
\end{table}

\begin{table}
\begin{center}
\begin{tabular}{|r|rrr|}
\hline
$n$   &$\pi(n)$   &$\kappa(n)$    &$\lfloor{\frac{n}{\log n}}\rceil$\\
\hline
10000 
&1229    
&1188  
&   1086\\
20000 
&2262    
&2167  
&   2019\\
30000 
&3245    
&3138  
&   2910\\
40000 
&4203    
&4055  
&   3775\\
50000 
&5133    
&4922  
&   4621\\
100000  
&9592    
&9126   
&   8686\\
\hline
200000  
&17984    
&16963   
&   16385\\
300000  
&25997    
&24590   
&   23788\\
400000  
&33860    
&31988   
&   31010\\
500000  
&41538    
&39311  
&   38103\\
600000  
&49098    
&46687  
&   45097\\
700000  
&56543    
&53730  
& 52010\\
800000  
&63951    
&60772  
& 58857\\
900000  
&71274    
&67853  
&65645\\
$1 \times 10^6$   
& 78498  
&74854   
& 72382\\ \hline
$2 \times 10^6$   
&148933        
&143082   
&137849 \\ \hline
\end{tabular}
\label{table_two}
\caption{Values of $\pi(n)$,
$\kappa(n)$ and $\lfloor n/\log n \rceil$ for
$n  \leq 2 \times 10^6$.}
\end{center}
\end{table}

We next consider ratios of these values to further study their
behavior.  See Table 4.
These ratios show
$\kappa(n)$ is a better approximation for $\pi(n)$ than the
Prime Number Theorem for $n \leq 2 \times 10^6$,
and that all three of the functions are close together.

Observe some of the ratios fluctuate.  This is expected since the
prime numbers are not evenly distributed.

\begin{table}
\begin{center}
\begin{tabular}{|r|rrr|}
\hline
$n$   & $\frac{\pi(n)}{\kappa(n)}$    & $\frac{\pi(n)}{n/\log n}$ & $\frac{\kappa(n)}{n/\log n}$\\
\hline
100
&0.83333
&1.15129
&1.38155
\\
1000
&1.07006
&1.16050
&1.08451
\\
10000
&1.03451
&1.13195
&1.09418
\\
20000
&1.04383
&1.12008
&1.07304
\\
30000
&1.03409
&1.11508
&1.07831
\\
40000
&1.03649
&1.11344
&1.07423
\\
50000
&1.04286
&1.11075
&1.06509
\\
100000
&1.05106
&1.10431
&1.05066
\\
200000
&1.06018
&1.09757
&1.03525
\\
300000
&1.05721
&1.09287
&1.03372
\\
400000
&1.05852
&1.09191
&1.03155
\\
500000
&1.05665
&1.09015
&1.03170
\\
$1 \times 10^6$
&1.04868
&1.08448
&1.03414
\\
$2 \times 10^6$
&1.04089
&1.08040
&1.03796\\
\hline
\end{tabular}
\caption{Ratios between
$\pi(n)$, $\kappa(n)$ and
$n/\log n$.}

\end{center}
\end{table}

\newpage

\section{Heuristic argument for behavior of $\pi(n)$ and $\kappa(n)$}

In this section, we provide a heuristic argument to
show that
$\kappa(n)$ and 
$\pi(n)$ have similar behavior.

\begin{heuristic}
$\kappa(n) \approx \pi(n)$
for all $n$.
\end{heuristic}
\begin{proof}
By the Prime Number Theorem, 
the probability that an integer $a$ is a prime is
$$
     P(\mbox{$a$ is a prime}) \approx \frac{1}{\log a}.
$$
Since half of the integers
are odd,
$$
     P(\mbox{$a$ is a prime given $a$ is odd}) \approx \frac{2}{\log a}.
$$

For a sequence of odd integers $a_1, a_2, \ldots, a_n$, 
the expected number of primes in
       this sequence is about
$$ 
   \mbox{\# primes in \{$a_1, a_2, \ldots, a_n$\}} 
   \approx \sum_{k=1}^n \frac{2}{\log a_k}.
$$
By Gauss, we know the number $C(n)$ is about $\pi n^2$,
that is,
$\log(C(k)) \approx \log \pi + 2 \log k$.
By Theorem~\ref{theorem_number_lattice_points}, each
$C(k)$ is odd.  Thus
\begin{eqnarray*}
   \kappa(n) &=& \mbox{the number of primes }\\
             & & \mbox{in the sequence $\{C(1), C(2), \ldots, C(n)\}$}\\
             &\approx &  \sum_{k=1}^n \frac{2}{\log (C(k))}\\ 
& \approx &
\sum_{k=1}^n \frac{2}{\log \pi + 2 \log k}\\
& = & 
\frac{2}{\log \pi} + \sum_{k=2}^n \frac{2}{\log \pi + 2 \log k}\\
& \approx&
\frac{1}{2}\sum_{k=2}^n \frac{2}{\log k}\\
& \approx &
\pi(n).
\end{eqnarray*}
\end{proof}

\newpage

\section{Concluding remarks}

There are some natural research directions to take
that involve other
behavior of prime numbers and the Gauss Circle Primes.

\begin{question}(Twin Gauss Circle Primes)
In 1844 Polignac
conjectured that there are infinitely-many
twin primes, that is, primes $p$ and $p+2$ that are
distance $2$ apart~\cite[Page 400]{Polignac}.
We conjecture the Gauss Circle Prime analogue:
there are infinitely many
cases when
$C(r)$ and $C(r+1)$
are both prime.
\end{question}

\begin{question}
(Infinitude of Gauss Circle Primes)
It is a classical result
of Euclid that 
there are infinitely-many primes~\cite[Book IX, Proposition 20]{Euclid}.
We conjecture there are
infinitely-many Gauss Circle primes.
\end{question}

\begin{question}
(Skewes constant and Gauss Circle Primes)
Recall the logarithmic  integral function
$$
   \li(x) = \lim_{\delta \rightarrow 0}
            \left\{ \int_0^{1 - \delta} \frac{du}{\log u}
              + 
               \int_{1+\delta}^x \frac{du}{\log u}
          \right\}.
$$
In 1914 Littlewood proved
that $\pi(n)$ and 
the logarithmic integral
$\li(x)$ cross infinitely-many
times~\cite{Littlewood}.
In 1933 Skewes~\cite{Skewes} showed that there is such a crossing for
some $x$ with $x <  10^{10^{10^{34}}}$,
assuming the Reimann hypothesis holds.   
It would be interesting to study the behavior of the functions
$\pi(n)$, $\kappa(n)$ and $\lfloor n/\log n \rceil$
near Skewe's constant.
\end{question}

\begin{question}
(Gauss $n$-Sphere Primes)
A natural extension of this research is to explore lattice points occurring
in an $n$-dimensional sphere
for $n \geq 3$.
\end{question}


\newcommand{\journal}[6]{{\sc #1,} #2, {\it #3} {\bf #4} (#5), #6.}
\newcommand{\preprint}[3]{{\sc #1,} #2, preprint #3.}
\newcommand{\book}[4]{{\sc #1,} #2, #3, #4.}
\newcommand{\collection}[6]{{\sc #1,}  #2, #3, in {\it #4}, #5, #6.}
\newcommand{\JCTA}{J.\ Combin.\ Theory Ser.\ A}
\newcommand{\arxiv}[3]{{\sc #1,} #2, {\tt #3}.}
\newcommand{\article}[3]{{\sc #1,} #2, {\tt #3}.}
\newcommand{\journalfive}[5]{{\sc #1,} #2, {\it #3}  (#4), #5.}

\vspace{1in}
\noindent
Thomas Ehrenborg, 
Department of Mathematics,
Cornell University,
Ithaca NY 14853, USA
{\tt tre26@cornell.edu}

\end{document}